\newtheorem{theorem}{Theorem}[section]
\newtheorem{lemma}[theorem]{Lemma}
\newtheorem{proposition}[theorem]{Proposition}
\newtheorem{corollary}[theorem]{Corollary}
\theoremstyle{definition}
\numberwithin{equation}{section}
\def\N{{\mathbb N}}
\def\R{{\mathbb R}}
\def\C{{\mathbb C}}
\def\re{\hbox{\rm Re~}}
\def\llll{{\longrightarrow}}
\def\sep{{ \ \  }}
\def\sem{{\ \ \ \ \ \  }}
\def\seg{{\ \ \ \  \ \  \ \  }}
\def\n3#1{\left\vert  \! \left\vert \! \left\vert \, #1 \, \right\vert \!
  \right\vert \! \right\vert }
\DeclareMathOperator{\rea}{Re}
\def\supp{\text{\rm supp \ }}
\def\com#1{{``#1''}}
\title[The Bishop-Phelps-Bollob\'{a}s property]
{The Bishop-Phelps-Bollob\'{a}s property for operators on $C(K)$}
\author[M. D. Acosta]{Mar\'{\i}a D. Acosta}
\address{Universidad de Granada, Facultad de Ciencias.
Departamento de An\'{a}lisis Matem\'{a}tico, 18071-Granada
(Spain)} \email{dacosta@ugr.es}
\thanks{
The    author was  supported  MTM2012-31755,  Junta de Andaluc\'{\i}a P09-FQM--4911 and
FQM--185. }
\date{2014-05-14}
\begin{document}

\dedicatory{ Dedicated to Rafael Pay\'{a}  on the occasion of his 60th birthday.}

\begin{abstract}
We provide a version for operators of the Bishop-Phelps-Bollob\'{a}s Theorem when  the domain space is the
complex space $C_0(L)$. In fact we prove that the pair $(C_0(L), Y)$  satisfies the
Bishop-Phelps-Bollob\'{a}s property for operators  for every Hausdorff locally compact  space $L$ and any
$\C$-uniformly convex space. As a consequence, this holds for $Y= L_p (\mu)$ ($1 \le p < \infty $).
\end{abstract}

\maketitle

{\large

\section{Introduction}

Bishop-Phelps Theorem  states the denseness of the subset of norm attaining functionals in the (topological)
dual of a Banach space \cite{BP}. Since the Bishop-Phelps  Theorem was proved in the sixties, some
interesting papers  provided  versions of this result for operators. Related to those results, it is worth to
mention  the pioneering work by  Lindenstrauss \cite{Lin}, the somehow surprising result obtained by Bourgain
\cite{Bou} and also  results for concrete classical Banach spaces (see below). In full generality there is no
parallel version of Bishop-Phelps Theorem for operators even if the domain space is $c_0$ \cite{Lin}.
Lindenstrauss also provided some results of denseness of the subset of  norm attaining operators by assuming
some isometric properties either on the domain or on the range space \cite{Lin}.  We mention here two
concrete consequences of these results. If the domain space is $\ell_1$ or the range space is $c_0$, every
operator can be approximated by norm attaining operators. First Lindenstrauss \cite{Lin} and later Bourgain
\cite{Bou}  proved that certain isomorphic assumptions on the domain space (reflexivity or even
Radon-Nikod\'ym property, respectively) implies the denseness of the subset of norm attaining operators in
the corresponding space of linear (bounded) operators. For classical Banach spaces, we only mention some
papers containing  positive results for specific pairs (see for instance \cite{JoWoS}, \cite{Iw},
\cite{Scha}, \cite{Uh}, \cite{APV}) and a few containing counterexamples (see \cite{Scha}, \cite{JoWoP},
\cite{Gow}, \cite{Ag} and \cite{Ac}). The paper \cite{Mar} also answers an old open problem in the topic.

Recently  the paper  \cite{AAGM} dealt with \com{quantitative} versions  of the Bishop-Phelps Theorem for
operators. The motivating result is known nowadays as {\it Bishop-Phelps-Bollob{\'a}s Theorem}  \cite{Bol,
BoDu} and has been a very useful tool to study numerical ranges of operators (see  for instance \cite{BoDu}).
This result can be stated as follows.

 \vskip2mm

 {\it Let  $X$ be a Banach space and $0 < \varepsilon <1$.
Given $x \in B_X$ and $x^\ast \in S_{X^\ast}$  with $\vert 1 - x^\ast(x ) \vert < \dfrac{\varepsilon^2}{4}$,
there are elements $y\in S_X $ and $ y^\ast \in S_{X^\ast}$ such that $y^\ast (y)=1$,  $ \Vert y-x \Vert <
\varepsilon$  and $ \Vert y^\ast -x^\ast \Vert < \varepsilon $.}
 \vskip2mm

Here $X^*$ denotes the (topological) dual of the Banach space $X$ and $S_X$ its unit sphere. We   write $B_X$
to denote the closed unit ball of $X$.

Throughout this paper, for two Banach spaces $X$ and $Y$, $L(X,Y)$ is  the {\it space of linear  bounded
operators} from $X$ into $Y$.  We  recall that the pair $(X, Y)$ {\it has the Bishop-Phelps-Bollob\'as
property for operators} (\emph{BPBp}), if for any $\varepsilon >0$ there exists $\eta(\varepsilon)>0$ such
that for any $T\in S_{\mathcal{L}(X,Y)}$, if $x_0 \in S_X$ is such that $\|Tx_0\|>1-\eta (\varepsilon)$, then
there exist an element $u_0 \in S_X$ and an operator $S \in S_{\mathcal{L}(X,Y)}$ satisfying  the following
conditions:
$$
\Vert Su_0 \Vert =1, \ \ \Vert u_0 - x_0 \Vert <\varepsilon \ \ ~\mbox{and}~ \Vert S-T \Vert
<\varepsilon.
$$

Acosta et all proved that for any space $Y$ satisfying the property $\beta $ of Lindenstrauss, the pair
$(X,Y)$ has the BPBp for operators for every Banach space $X$ \cite[Theorem 2.2]{AAGM}. For  the domain
space, there is no a reasonably general property  implying a positive result. However there are some positive
results in concrete cases.  For instance, there is a characterization of the spaces $Y$ such that the pair
$(\ell_1, Y)$ satisfies the BPBp \cite{AAGM}.  As a consequence of this result, it is known that this
condition is satisfied by finite-dimensional spaces, uniformly convex spaces, $C(K)$ ($K$ is some compact
topological space)   and  $L_1 (\mu)$ (any measure $\mu$). Aron et all showed that the pair $(L_1 (\mu),
L_\infty  ([0,1])) $  has also the BPBp for every $\sigma$-finite measure $\mu$ \cite{ACGM}. This result has
been extended  recently by   Choi et all (see \cite{ChKiLM}).  Some related results for operators whose
domain is  $L_1 (\mu)$  can be also found in \cite{ChKi}, \cite{ABGKM} and \cite{ChKiLM}.

Now we point out results stating that the pair $(X,Y)$ has the BPBp in case that the domain space is $C_0(L)$
(space of continuous functions on a locally compact Hausdorff space $L$ vanishing at infinity). Kim proved
that  in the real  case the pair $(c_0,Y)$ has the BPBp for operators whenever $Y$ is uniformly convex
\cite{Kim}. The paper \cite{ABCCKLLM} contains also a positive result for the pair $(C(K), C(S))$ in the real
case ($K$ and $S$ are compact Hausdorff  spaces). Let us point out  that in the complex case it is not known
yet if the subset of norm attaining operators from $C(K) $ to $C(S)$ is dense in $L(C(K),C(S))$. Very
recently Kim, Lee and Lin \cite{KLL} proved that the pair $(L_\infty (\mu), Y)$ has the BPBp  whenever $Y$ is
a uniformly convex space and $\mu$ is any positive measure. The authors also state the analogous result in
complex case   for  the pairs $(c_0, Y)$ and $(L_\infty (\mu), Y)$  ($\mu$ is any positive measure) whenever
$Y$ is a  $\C$-uniformly convex space. It also holds that the pair $(C(K), Y)$ has the BPBp in the real case
for any uniformly convex space \cite{KL}.

In this  paper we show that  the subspace  of weakly compact operators from $C_0(L)$ into $Y$ satisfies the
Bishop-Phelps-Bollob\'{a}s property for operators  in the complex case, for every  locally compact Hausdorff
space $L$ and for any  $\C$-uniformly convex (complex) space.  Let us notice  that this  is an extension of
the result in \cite{KLL} for the complex case in two ways. First we consider  any space $C(K) $ instead of
$L_\infty (\mu)$ as the domain space and also we consider a strictly more general property on the range
space, namely $\C$-uniform convexity instead of uniform convexity. Our result  extends \cite[Theorem
5.2]{AAGM} in a satisfactory way and the   recent result  in \cite{KLL}  for the case that the domain  space
is $L_\infty (\mu)$. As a consequence, in the complex case the pair $(C(K), L_1 (\mu))$ has the BPBp for
every compact Hausdorff space $K$ and any measure $\mu$.

Let us  recall again that it is not trivial at all to obtain the result in the complex case from the real
case when the domain space is $C(K)$.  As we already pointed out, it is an open problem whether or not  the
subset of norm attaining operators between complex spaces $C(K)$ and $C(S)$ is dense in $L(C(K), C(S))$.
However a positive result  for real $C(K)$ spaces was proved many years ago \cite{JoWoS}.

Let us notice that in case that the range space is  $C(K)$ or more generally  a uniform algebra, the papers
\cite{ACK} and \cite{CGK} provides positive results for the BPBp for the class of Asplund operators.

\section{The result}

Throughout this section  it worths to consider   only {\bf complex} normed spaces. For a complex Banach space
$Y,$ recall that  the {\it $\C$-modulus of convexity} $\delta $ is defined for every $\varepsilon > 0$ by
$$
\delta (\varepsilon )= \inf \bigl\{ \sup \{ \Vert x + \lambda \varepsilon y\Vert -1 : \lambda \in  \C, \vert
\lambda  \vert =1\bigl\}: x,y \in S_Y \bigr\}.
$$
Recall that the Banach space $Y$ is $\C$-{\it uniformly convex } if $\delta (\varepsilon )
> 0$ for every $\varepsilon > 0$ \cite{Glo}.   Every   uniformly convex complex space is $\C$-uniformly
convex and the converse is not true.   Globevnik proved that the complex space $L_1 (\mu)$ is $\C$-uniformly
convex \cite [Theorem 1]{Glo}.

We will denote by $\overline{D}(0,1)$ the closed unit disc in $\C$. Let us notice that for $0 < s < t$ it is
satisfied that $\sup \bigl\{ \Vert x + \lambda sy\Vert : \lambda \in \overline{D}(0,1) \bigr\} \le \sup
\bigl\{ \Vert x + \lambda ty\Vert : \lambda \in \overline{D}(0,1) \bigr\}$. Hence $\delta $ is an increasing
function and $ \delta (t) \le t$ for every $t > 0$.

%If $0< s < t$ we have that
%$$
%x+s\lambda y = \frac{t-s}{2t} (x-t \lambda y) + \frac{s+t}{2t} (x+t \lambda y).
%$$

%Kim proved recently that the pair $(c_0,Y)$ has the BPBp for operators whenever $Y$ is uniformly convex space
%\cite{Kim}. This result has been extended from $c_0$ to the  space $C(K)$ of continuous real valued functions
%on $K$. Here we show the corresponding result for the space of continuous complex  valued functions whenever
%$Y$ is a $\C$-uniformly convex  space.  The results of this section are valid for  {\bf complex} Banach
%spaces. For a Banach  space  $X$, $B_X$ will be the closed unit ball of $X$ and $X^*$ the topological dual of
%$X$.

In what follows $L$ will be  a locally compact Hausdorff topological space and $C_0(L) $ will be the space of
continuous complex valued functions on $L$ vanishing at infinity.

\bigskip

It is convenient to  state  the next trivial result

\begin{lemma}
\label{basic-l}
Assume that $\lambda, w \in \overline{D}(0,1), t \in ]0,1[$  and $\rea w\lambda
> 1-t $. Then $\vert w- \overline{ \lambda} \vert < \sqrt{2t}$.
\end{lemma}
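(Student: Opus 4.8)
The plan is to prove this purely computationally, since the statement is a quantitative estimate about complex numbers in the closed unit disc. I want to bound $|w - \overline{\lambda}|$, so the natural first move is to compute $|w - \overline{\lambda}|^2$ using the fact that for any complex number $z$ one has $|z|^2 = z\overline{z}$, or equivalently by expanding in terms of real parts. Writing $|w - \overline{\lambda}|^2 = |w|^2 - 2\,\rea(w\lambda) + |\lambda|^2$, where I have used that $\overline{w}\cdot\overline{\lambda}$ together with its conjugate $w\lambda$ produces the cross term $-2\,\rea(w\lambda)$ (note $w \overline{\overline{\lambda}} = w\lambda$).

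With that expansion in hand, the estimates fall out immediately from the hypotheses. Since $w, \lambda \in \overline{D}(0,1)$ we have $|w|^2 \le 1$ and $|\lambda|^2 \le 1$, and the hypothesis $\rea(w\lambda) > 1 - t$ gives $-2\,\rea(w\lambda) < -2(1-t) = -2 + 2t$. Combining these,
\[
|w - \overline{\lambda}|^2 \le 1 - 2\rea(w\lambda) + 1 < 1 + (-2 + 2t) + 1 = 2t,
\]
and taking square roots yields $|w - \overline{\lambda}| < \sqrt{2t}$, which is exactly the desired conclusion.

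I do not anticipate any real obstacle here; the only point requiring a moment of care is the bookkeeping in the cross term of the expansion, namely verifying that the conjugation $\overline{\lambda}$ in the target quantity pairs with $w$ to produce precisely $\rea(w\lambda)$ rather than $\rea(w\overline{\lambda})$. Once that is checked, the result is a one-line consequence of $|z|^2 = z\overline{z}$ together with the two trivial bounds $|w| \le 1$, $|\lambda| \le 1$ and the hypothesis on $\rea(w\lambda)$. The condition $t \in \,]0,1[$ is not even needed for the inequality itself (it merely ensures $1 - t > 0$, making the hypothesis nonvacuous and $\sqrt{2t}$ a sensible small bound), so I would not dwell on it beyond noting that the argument goes through verbatim.
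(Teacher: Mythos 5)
Your computation is correct: expanding $\vert w-\overline{\lambda}\vert^2 = \vert w\vert^2+\vert\lambda\vert^2-2\rea(w\lambda)\le 2-2\rea(w\lambda)<2t$ gives the claim, and the conjugation bookkeeping is handled properly. The paper states this lemma without proof as a triviality, and your argument is exactly the intended one-line verification.
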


%\begin{proof}
%Clearly we have that
%\begin{eqnarray*}
%\vert w - \overline{\lambda }  \vert ^2    & = &\vert w \vert ^2 + \vert \lambda \vert ^2 - 2 \rea \lambda w  \\
% &  \le  &   2  \bigl ( 1 - \rea \bigl( \lambda w \bigr) \bigr)    \\
%& < &   2t.
%\end{eqnarray*}
%\end{proof}

\bigskip

%In this way we t  for the operator acting on $\mathcal{B}(L)$ as the restriction of $T^{**}$ to the subspace
%of Borel functions on $L$ with support contained in $B$.

\bigskip

%\newpage

%{\bf Deber{\'Y}a de escribir antes algo que desnormalice el uso  del m{\~N}dulo de convexidad, que, de momento, est{\ss}
%escrito s{\~N}lo para vectores de la esfera unidad para poder usarlo tranquilamente en el lema anterior.}

%The symbol $\mathcal{WC} (X,Y)$ denotes the subspace of weakly compact operators from $X$ to $Y$ for any
%Banach spaces $X$ and $Y$.

As we already mentioned, the subset of norm attaining operators  between two Banach spaces is not always
dense in the corresponding space of operators in case that the domain space is $C_0(L)$.  Let us notice that
there are examples of spaces $Y$ for which the   subspace of finite-dimensional operators from the space
$\ell_{\infty}^2 $ to $Y$ does not have the Bishop-Phelps-Bollob\'{a}s property (see \cite[Theorem 4.1 and
Proposition 3.9]{AAGM} or \cite[Corollary 3.3]{ACKLM}). For those  reasons some restriction  is needed on the
range space in order to obtain a BPBp result in case that the domain space is $C_0(L)$.

 Schachermayer proved  a Bishop-Phelps result in the real case for the subspace of weakly compact operators
from any space $C_0(L) $ into any Banach space \cite{Scha}. Alaminos et all extended this result to the
complex case \cite{ACKP}.   The last result is one of the tools essentially used in the proof of the main
result. This is our motivation for the next assertion, that might be known, and has interest in itself.

\bigskip

%{\bf El siguiente resultado es nuevo.}

\begin{proposition}
\label{Ja}
Let $Y$ be a $\C$-uniformly convex Banach space and $L$ any locally compact  Hausdorff space. Then every
operator from $C_0(L)$ into $Y$ is weakly compact.
\end{proposition}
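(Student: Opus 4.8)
The plan is to reduce the assertion to the well-known fact that operators from a $C(K)$-space into a space not containing $c_0$ are weakly compact, and then to verify that a $\C$-uniformly convex space cannot contain $c_0$. First I would invoke Pe\l czy\'nski's theory of operators on $C(K)$-spaces: since $C_0(L)$ enjoys property $(V)$, a bounded operator $T\colon C_0(L)\to Y$ is weakly compact if and only if it is unconditionally converging, and a non--weakly-compact operator on such a space must \emph{fix a copy of} $c_0$, that is, there is a subspace $E\subseteq C_0(L)$, isomorphic to $c_0$, on which $T$ is an isomorphism onto its range. (When $L$ is only locally compact one reduces to the compact case through the one-point compactification $L^+$, using $C(L^+)=C_0(L)\oplus\C$.) Arguing by contradiction, if some $T$ failed to be weakly compact then $T(E)$ would be a subspace of $Y$ isomorphic to $c_0$; hence it suffices to show that $Y$ contains no isomorphic copy of $c_0$.

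To see this, suppose $Y$ did contain such a copy. By James's distortion theorem, for each $\rho>0$ there is a normalized basic sequence $(x_n)$ in $Y$, equivalent to the unit vector basis of $c_0$, with $\max_n|a_n|\le\bigl\|\sum_n a_n x_n\bigr\|\le(1+\rho)\max_n|a_n|$ for all finitely supported complex scalars $(a_n)$. Putting $x:=x_1$ and $y:=x_2$, which lie in $S_Y$, for every $\lambda\in\C$ with $|\lambda|=1$ the coefficient vector $(1,\lambda)$ has maximum modulus $1$, so $\|x+\lambda y\|\le 1+\rho$. Therefore $\sup\{\|x+\lambda y\|-1:|\lambda|=1\}\le\rho$, and since $\delta(1)$ is an infimum over all pairs in $S_Y$ we get $\delta(1)\le\rho$. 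As $\rho>0$ was arbitrary, $\delta(1)=0$, contradicting the $\C$-uniform convexity of $Y$.

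The main obstacle is the first, qualitative step: the structural dichotomy that a non--weakly-compact operator on $C_0(L)$ fixes a copy of $c_0$ (equivalently, property $(V)$ of $C_0(L)$ together with the identification of unconditionally converging operators). This is the substantial classical input, and the only point requiring care for general locally compact $L$ is the reduction to $C(L^+)$. By contrast, the computation of the $\C$-modulus on an almost isometric $c_0$-block is entirely routine; the one subtlety there is that $\C$-uniform convexity is an isometric notion, which is precisely why James's distortion theorem, producing \emph{almost isometric} copies of the $c_0$-basis, is needed rather than a bare isomorphic embedding.
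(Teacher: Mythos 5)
Your proposal is correct and follows essentially the same route as the paper: reduce to the compact case via the one-point compactification, invoke Pe\l czy\'nski's theorem that operators from $C(K)$ into a space not containing $c_0$ are weakly compact, and rule out $c_0\subseteq Y$ by combining James's distortion theorem with the observation that an almost isometric $c_0$-pair forces $\delta(1)=0$. The only cosmetic difference is that the paper passes through the real case and the Bessaga--Pe\l czy\'nski selection principle before applying the $C(K)$ result, whereas you apply the complex version of the dichotomy directly; both are standard and valid.
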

\begin{proof}
By the proof of  James  distortion Theorem (see for instance \cite[Proposition 2.e.3]{LinTza})  the space $Y$
cannot contain a copy of $c_0$ (the space of complex sequences converging to zero, endowed with the usual
norm). Otherwise, by considering a convenient multiple of the norm in $Y$, $\n3{}$,  that it is still
$\C$-uniformly convex, one can assume that the usual norm of the copy of $c_0$ ($\Vert \ \Vert$) satisfies
\begin{equation}
\label{norm-equiv}
\alpha \n3{x} \le \Vert x \Vert \le \n3{x}, \ \ \ \forall x \in X,
\end{equation}
for some $\alpha > 0$.  By the proof of \cite[Proposition 2.e.3]{LinTza},  for any $ \varepsilon> 0$  there
is a sequence $(y_n)$ in $Y$  of block basis of the usual basis of $c_0 $ satisfying
$$
\n3{y_n} =1 \ \ \ \ \forall n \in  \N, \ \ \ \ \n3{ \sum _{ k=1}^{\infty } a_n y_n } \le  (1+\varepsilon )^2
\Vert (a_n) \Vert _\infty , \ \ \ \forall (a_n) \in c_0
$$
and $\Vert \sum _{ k=1}^{\infty } a_n y_n \Vert = \Vert (a_n) \Vert _\infty $ for every $(a_n) \in c_0$.
Clearly the above condition contradicts the $\C$-uniform convexity of $Y$.

Now, in view of  Bessaga-Pelczynski selection principle, if the underlying real space of  a complex  space
contains a real space isomorphic to $c_0$, then it contains the complex space $c_0$. So $Y_\R$ does not
contain the real space $c_0$, hence for any compact space $K$,  every (real) operator from the space $C(K)$
(real valued functions) into $Y$ is weakly compact. As a consequence, every operator from the complex space
$C(K)$ into $Y$ is also weakly compact. From here it can be  easily deduced that every operator from $C_0(L)$
into $Y$ is weakly compact,  since  $C_0(L)$ is complemented in the space $C(K)$, being $K$ the Alexandrov
compactification of $L$. Hence every operator from $C_0(L)$ into $Y$ can be extended to an operator from
$C_0(L)$ into $Y$.
\end{proof}

%{\bf
For a locally compact Hausdorff topological  space $L$, we  denote by $\mathcal{B}(L)$ the space of
Borel measurable and bounded  complex valued functions defined on  $L$, endowed with the sup norm. If  $ B
\subset L$ is a Borel measurable set, denote by $P_B$ the projection $P_B :  \mathcal{B}(L) \llll
\mathcal{B}(L) $ given by $ P_B (f) = f \chi _B $ for any $f \in \mathcal{B}(L)$. Of course, in view of Riesz
Theorem, the space $\mathcal{B}(L)$ can be identified  in a natural way  as a subspace of $C_0(L) ^{**}$. As
a consequence, for an operator $T\in L( C_0(L),Y)$ and a Borel set $B \subset L$,  the composition $T^{**}
P_B$ makes sense.
%}

\begin{lemma}
\label{l-UC-P}
Let $Y$ be a $\C$-uniformly convex space with modulus of $\C$-convexity $\delta $.  Let $L$ be a locally
compact Haussdorf  topological space and $A$ a Borel set of $L$. Assume that for some $ 0 <  \varepsilon < 1$
and $T \in S_{L (C_0(L),Y)}$ it is satisfied  $\Vert T^{**} P_A \Vert > 1-   \frac{ \delta (\varepsilon )}{ 1
+ \delta (\varepsilon )} $. Then   $\Vert T ^{**} (I-P_A) \Vert \le \varepsilon.$
\end{lemma}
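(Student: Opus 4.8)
The plan is to argue by contradiction, using that $P_A$ and $I-P_A$ have \emph{disjoint supports} in $\mathcal{B}(L)$, so that the two pieces of $T^{**}$ can be amalgamated into the image of a single norm-one element, and then to read off the estimate from the definition of the $\C$-modulus of convexity.

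Suppose, towards a contradiction, that $\Vert T^{**}(I-P_A)\Vert > \varepsilon$. Write $r = 1 - \frac{\delta(\varepsilon)}{1+\delta(\varepsilon)} = \frac{1}{1+\delta(\varepsilon)}$, so the hypothesis reads $\Vert T^{**}P_A\Vert > r$. I would choose $f,g$ in the unit ball of $\mathcal{B}(L)$ with $\Vert T^{**}P_A f\Vert > r$ and $\Vert T^{**}(I-P_A)g\Vert > \varepsilon$, and set $u = T^{**}P_A f$, $w = T^{**}(I-P_A)g$, $a=\Vert u\Vert$, $b=\Vert w\Vert$, so that $a>r$ and $b>\varepsilon$. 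Since $\Vert T^{**}\Vert=\Vert T\Vert=1$ and $\Vert f\chi_A\Vert_\infty\le 1$, we have $a\le 1$, hence $b/a\ge b>\varepsilon$. The key observation is that $P_A f = f\chi_A$ and $(I-P_A)g = g\chi_{L\setminus A}$ live on the disjoint Borel sets $A$ and $L\setminus A$; therefore, for every $\lambda\in\C$ with $\vert\lambda\vert=1$, the function $f\chi_A+\lambda\, g\chi_{L\setminus A}$ has sup norm $\max(\Vert f\Vert_\infty,\Vert g\Vert_\infty)\le 1$, and applying the norm-one operator $T^{**}$ gives
$$
\Vert u+\lambda w\Vert = \bigl\Vert T^{**}\bigl(f\chi_A+\lambda\, g\chi_{L\setminus A}\bigr)\bigr\Vert \le 1, \qquad \vert\lambda\vert=1.
$$

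Now I would feed this into the $\C$-modulus of convexity. Writing $u=a\hat u$ and $w=b\hat w$ with $\hat u,\hat w\in S_Y$ (legitimate since $a,b>0$) and applying the definition of $\delta$ to $\hat u,\hat w$ with parameter $b/a$, homogeneity yields
$$
1 \ge \sup_{\vert\lambda\vert=1}\Vert u+\lambda w\Vert = a\,\sup_{\vert\lambda\vert=1}\Vert \hat u+\lambda (b/a)\hat w\Vert \ge a\bigl(1+\delta(b/a)\bigr).
$$
Since $\delta$ is increasing and $b/a>\varepsilon$, this forces $a\bigl(1+\delta(\varepsilon)\bigr)\le 1$, i.e. $a\le \frac{1}{1+\delta(\varepsilon)}=r$, contradicting $a>r$; hence $\Vert T^{**}(I-P_A)\Vert\le\varepsilon$. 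I do not expect a serious obstacle here. The only points demanding care are the isometric identification of $\mathcal{B}(L)$ inside $C_0(L)^{**}$ (so that the disjoint-support norm is genuinely computed as a sup norm and $\Vert T^{**}\Vert=1$), and the correct scaling in the last step, where homogeneity $\Vert u+\lambda w\Vert=a\Vert\hat u+\lambda(b/a)\hat w\Vert$ must be combined with the monotonicity of $\delta$ to reduce $\delta(b/a)$ to $\delta(\varepsilon)$.
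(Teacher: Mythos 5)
Your argument is correct and is essentially the paper's own proof: the paper likewise uses the disjointness of the supports to get $\Vert T^{**}(f + \lambda(I-P_A)g)\Vert \le 1$ for all $\vert\lambda\vert\le 1$, divides by $\Vert T^{**}(f)\Vert > \frac{1}{1+\delta(\varepsilon)}$, and reads off $\Vert T^{**}(I-P_A)g\Vert \le \varepsilon\Vert T^{**}(f)\Vert \le \varepsilon$ from the definition and monotonicity of $\delta$ (stated directly rather than by contradiction, but that is the same computation). The one point you gloss over is that the $\C$-modulus of convexity is defined for elements of $S_Y$, so you must first invoke Proposition \ref{Ja} (weak compactness of $T$) to know that $T^{**}$ maps $\mathcal{B}(L)\subset C_0(L)^{**}$ into $Y$ rather than merely into $Y^{**}$; the paper does this explicitly at the start of its proof.
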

\begin{proof}
Assume that $T$ satisfies the assumptions of the result.  By Proposition \ref{Ja} $T$ is a weakly compact
operator, so $T^{**} (C_0(L))^{**} \subset Y$ and we consider the subspace $ \mathcal{B}(K) \subset C_0(L)
^{**}$. We write  $\eta = \frac{ \delta (\varepsilon )}{1 + \delta (\varepsilon )}$.  By the assumption,
there exists $ f\in S_{ \mathcal{B}(L)}$ such that $f=P_A (f)$ and $\Vert T^{**} ( f ) \Vert
> 1 - \eta  > 0$. For every $ g  \in B_{ \mathcal{B}(L)}$ it is satisfied that $\Vert  f + (I-P_A) (g) \Vert \le 1$ and so $
\Vert T^{**}(f + \lambda(I-P_A)g ) \Vert \le 1 $ for every $\lambda \in \overline{D }(0,1)$.  That is, for
any $ \lambda \in \overline{D}(0,1)$ we have
\begin{eqnarray*}
 \Bigl \Vert  \frac{T^{**}(f)}{\Vert T^{**}(f) \Vert }  + \lambda \frac{T^{**}(I-P_A)(g) }{\Vert T^{**}(f)
\Vert } \Bigr \Vert   & \le&  \frac{1}{ \Vert T^{**}(f) \Vert} \\
& < &   \frac{1}{1 - \eta  } =  1+\delta (\varepsilon).
\end{eqnarray*}
As a consequence $\Vert T^{**}(I-P_A)(g)  \Vert \le \varepsilon \Vert T^{**}(f) \Vert \le \varepsilon$ and so
$\Vert T^{**} (I-P_A)  \Vert \le \varepsilon  $.
\end{proof}

\bigskip

\begin{theorem}
\label{CK-comlex-UC}
The pair   $(C_0(L),Y)$  satisfies the  Bishop-Phelps-Bollob\'{a}s property for operators for any locally
compact Hausdorff topological space $L$ and any $\C$-uniformly convex space $Y$. Moreover the function $\eta$
appearing in the Definition of BPBp depends only on the modulus of convexity of $Y$.
\end{theorem}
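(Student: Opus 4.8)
The plan is to pass to the representing measure of $T$, concentrate the scalar information carried by a norming functional on a single Borel set $A$, use Lemma \ref{l-UC-P} to make the part of $T$ living off $A$ negligible, and then correct $T$ to a norm-attaining operator by a rank-one perturbation whose exactness is supplied by the Bishop--Phelps theorem for weakly compact operators. By Proposition \ref{Ja} every $T\in L(C_0(L),Y)$ is weakly compact, so $T^{**}$ is defined on $\mathcal{B}(L)$ and the vector measure $m(B)=T^{**}(\chi_B)$ is available. Given $\varepsilon>0$, $T\in S_{L(C_0(L),Y)}$ and $f_0\in S_{C_0(L)}$ with $\|Tf_0\|>1-\eta$, I would choose $y_0^\ast\in S_{Y^\ast}$ with (after a unimodular rotation) $\re y_0^\ast(Tf_0)>1-\eta$ and set $\mu=T^\ast y_0^\ast$, a Radon measure with $\|\mu\|\le 1$. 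Writing $d\mu=h\,d|\mu|$ with $|h|=1$, the hypothesis becomes $\int_L(1-\re(f_0h))\,d|\mu|<\eta$.

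Since the integrand is nonnegative, a Chebyshev estimate shows that $A=\{t\in L:\re(f_0(t)h(t))>1-\sqrt\eta\}$ satisfies $|\mu|(L\setminus A)<\sqrt\eta$, and testing $T^{**}P_A$ against $f_0$ through $y_0^\ast$ gives $\|T^{**}P_A\|\ge\re\int_A f_0h\,d|\mu|>1-c\sqrt\eta$ for an absolute constant $c$. Choosing $\eta$ small enough, as a function of $\delta$ alone, that $1-c\sqrt\eta>1-\frac{\delta(\varepsilon')}{1+\delta(\varepsilon')}$ for an auxiliary $\varepsilon'<\varepsilon$, Lemma \ref{l-UC-P} yields $\|T^{**}(I-P_A)\|\le\varepsilon'$. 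Hence the operator $T_A\colon g\mapsto T^{**}(\chi_A\widehat g)$ obeys $\|T-T_A\|\le\varepsilon'$, and $T_Af_0=u$ with $\|u\|\ge\re y_0^\ast(u)>1-c\sqrt\eta$; this is the step in which $\C$-uniform convexity converts the scalar concentration of $\mu$ into operator-norm concentration of $T$ and exhibits $f_0$ as an approximate norm-attaining point of $T_A$.

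It then remains to upgrade this approximate attainment to an exact one nearby. Using that $f_0\in C_0(L)$ attains its modulus at some $t_0$ with $|f_0(t_0)|=1$, a rank-one correction $S_1g=T_Ag+\beta\,\overline{f_0(t_0)}\,g(t_0)\,v$ (with $v=u/\|u\|$ and $\beta=1-\|u\|$) produces $\|S_1f_0\|=1$ while keeping $\|S_1-T\|$ of order $\varepsilon'$; applying the Bishop--Phelps theorem for weakly compact operators of \cite{ACKP} to $S_1$, and invoking the rigidity furnished by Lemma \ref{l-UC-P} (the mass of any norming functional must again concentrate on $A$, where $f_0$ is fixed), would yield $S\in S_{L(C_0(L),Y)}$ and $g_0\in S_{C_0(L)}$ with $\|Sg_0\|=1$, $\|g_0-f_0\|<\varepsilon$ and $\|S-T\|<\varepsilon$. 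Summing the three error contributions and absorbing them into $\varepsilon$, together with the fact that each threshold was dictated only by $\delta$, gives the stated dependence of $\eta$ on the modulus of convexity alone.

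The hard part is this last upgrade: in the complex, infinite-dimensional setting one cannot round the continuously varying phase of $f_0$ on $A$ to force exact norm attainment, so the exact attainer must be imported from the Bishop--Phelps theorem for weakly compact operators while \emph{simultaneously} keeping both the operator and its attaining point within $\varepsilon$. Making the rigidity argument quantitative --- so that the attaining point $g_0$ of the Bishop--Phelps operator provably stays $\varepsilon$-close to $f_0$ globally, rather than merely close on $A$ --- is where the bulk of the technical work, and the essential use of $\C$-uniform convexity through Lemma \ref{l-UC-P}, will be concentrated.
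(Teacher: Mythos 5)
Your opening moves coincide with the paper's: norming functional $y_1^*$, the measure $\mu_1=T^*y_1^*$ with density $g_1$, the Chebyshev set $A$ where $\rea f_0g_1>1-\beta$, and Lemma \ref{l-UC-P} to kill $T^{**}(I-P_A)$. The genuine gap is in the step you yourself flag as ``the hard part'': you invoke the Bishop--Phelps theorem of \cite{ACKP} and assert that ``the mass of any norming functional must again concentrate on $A$, where $f_0$ is fixed,'' but nothing in your construction forces this. Bishop--Phelps gives an operator $S_2$ near $S_1$ attaining its norm at \emph{some} $g_0$, with no control on $g_0$; concentration of the new norming measure $\mu_2=S_2^*y_2^*$ on $A$ would only tell you (via Lemma \ref{basic-l}) that $g_0\approx\overline{g_2}$ there, and to conclude $g_0\approx f_0$ you need the new phase $g_2$ to agree with the old phase $g_1$. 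Your rank-one correction $\beta\,\overline{f_0(t_0)}\,g(t_0)\,v$ is designed to make $\|S_1f_0\|=1$ and does nothing to enforce that alignment. The paper's key device, absent from your proposal, is a different rank-one perturbation, $\tilde S(f)=T^{**}(f\chi_B)+\varepsilon_1\,y_1^*\bigl(T^{**}(f\chi_B)\bigr)\,T^{**}(\overline{g_1}\chi_B)$, which inflates the norm specifically in the direction $\overline{g_1}$; this forces, quantitatively, $\rea y_2^*\bigl(R_2(\overline{g_1}|_B)\bigr)\approx 1$ for the Bishop--Phelps approximant, and hence (two applications of Lemma \ref{basic-l}) that the attaining point $h_1$ and the new phase $\overline{g_2}$ both agree with $\overline{g_1}\approx f_0$ on a set $C$ of large $|\mu_2|$-measure. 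Without some such mechanism the argument collapses at exactly the point where BPBp differs from mere denseness of norm-attaining operators.

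A second, smaller but still substantive omission: even with the rigidity in hand, closeness of $h_1$ to $f_0$ holds only on a large-$|\mu_2|$-measure subset, not in sup norm on $L$, and $h_1$ need not have modulus one anywhere. The paper needs Lusin's theorem to pass to a compact $B$ where $g_1$ is continuous (so that pointwise phase estimates become sup-norm estimates), the convexity trick $h_1=\tfrac12(h_2+h_3)$ to replace $h_1$ by an $h_2$ with $|h_2(t_0)|=1$ at which $R_2$ still attains its norm, a Tietze extension and Urysohn gluing to build $f_3\in S_{C_0(L)}$ with $\|f_3-f_0\|<\varepsilon$, a second application of Lemma \ref{l-UC-P} to a compact $K_1\subset C$, and finally a redefinition of the operator so that it depends on $f$ only through $f\chi_{K_1}$ and $f(t_0)$, guaranteeing exact norm attainment at $f_3$. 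None of this is routine bookkeeping, and your proposal does not indicate how any of it would be carried out.
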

\begin{proof}
Fix $0<\varepsilon<1$ and let $\delta(\varepsilon)$ be the modulus of  $\C$-convexity of $Y$. We denote
$\eta = \dfrac{ \varepsilon^2 \delta \bigl( \frac{\varepsilon}{9}\bigr)^2 } { 10945 \bigl( 1 + \delta \bigl(
\frac{\varepsilon}{9}\bigr) \bigr) ^2 }$ and $ s = \frac{ \eta (2- \varepsilon) \varepsilon^2}{ 2
(\varepsilon^2 + 2 \cdot 12^2 )}$.

Assume that  $T\in S_{L(C_0(L),Y)}$ and $f_0\in S_{C_0(L)}$ satisfy that
$$
\Vert Tf_0\Vert >1-s.
$$
Our goal is to find an operator $S\in S_{L(C_0(L),Y)}$ and $g\in S_{C_0(L)}$ such that
$$
\Vert S(g)\Vert =1, \sem \Vert S-T\Vert <\varepsilon, \sem \text{and} \sem \Vert g- f_0\Vert <\varepsilon.
$$

We can  choose $y_1^*\in S_{Y^*} $ such that
\begin{equation}
\label{y1-T-f0}
\rea y_1^*(Tf_0)=\Vert Tf_0\Vert >1-s .
\end{equation}
 We identify $C_0(L)^*$ with the space $M(L)$
of Borel regular complex measures on $L$ in view of Riesz Theorem. We write $\mu_1= T^* (y_{1}^*)  \in M(L)$.
Since $\mu_1$ is absolutely continuous with respect to its variation $\vert \mu_1 \vert$, by the
Radon-Nikod\'ym Theorem there is a Borel measurable function $g_1 \in \mathcal{B}(L)$ such that $\vert
g_1\vert =1$ and such that
$$
\mu_1 (f)= \int_{L} f g_1 \; d \vert \mu _1 \vert, \seg \forall f \in  C_0(L).
$$

We write $\beta = \frac{\varepsilon^2}{2\cdot 12^2}$ and  denote by $A$ the set given by
$$
A=\bigl\{  t \in L : \re f_0 (t)g_1(t) > 1 - \beta \bigr\}.
$$
By Lemma \ref{basic-l} we have that
\begin{equation}
\label{f0-g1}
\Vert  (f _0- \overline{g_1} ) \chi _A  \Vert_\infty \le    \sqrt{2\beta} =\frac{\varepsilon}{12}.
\end{equation}

Clearly $A$ is also Borel measurable and  we know that
\begin{eqnarray*}
 1-s  &  <  & \rea y_1^*(Tf_0)=\rea  \mu_1 (f_0)  = \re \int_L f_0 g_1 \ d \vert \mu _1 \vert \\
 & = & \re \int_A f_0 g_1 \ d \vert \mu _1 \vert  + \re \int_{L \backslash A}  f_0 g_1 \ d \vert \mu _1 \vert \\
 & \le  & \vert \mu _1 \vert (A) + (1-\beta)   \vert \mu _1 \vert  (L \backslash A)\\
 & =  & \vert \mu _1 \vert (L) - \beta  \vert \mu _1 \vert  (L \backslash A)\\
 & \le  & 1 - \beta   \vert \mu _1 \vert  (L \backslash A).
\end{eqnarray*}
Hence
\begin{equation}
\label{mu1-A}
\vert \mu _1 \vert  (L \backslash A) \le \frac{s}{\beta} = \frac{ \eta (2- \varepsilon)  12^2}{\varepsilon^2
+ 2 \cdot 12^2}.
\end{equation}

By Lusin's Theorem (see for instance \cite[Theorem 2.23]{Ru}) and by the inner regularity of $\mu_1$ there is
a compact set $B \subset A$ such that the restriction of $g_1$ to $B$ is continuous, and $\vert \mu_1 \vert
(A \backslash B) \le  \dfrac{ \varepsilon \eta}{2}$ and so
\begin{equation}
\label{mu1-com-B}
\vert \mu _1 \vert  (L \backslash B) \le \vert \mu _1 \vert  (L \backslash A) + \vert \mu _1 \vert  (A
\backslash B) \le \frac{s}{\beta} + \dfrac{ \varepsilon \eta}{2} .
\end{equation}
From  \eqref{y1-T-f0} and the previous estimate  we obtain
\begin{equation}
\label{mu1-B}
\vert \mu _1 \vert  (B ) = \vert \mu _1 \vert  (L ) - \vert \mu _1 \vert  ( L \backslash B )
> 1 - s -  \frac{s}{\beta} - \dfrac{ \varepsilon \eta}{2} = 1 - \eta .
\end{equation}

Hence
\begin{eqnarray*}
\Vert T^{**} P_B \Vert  & \ge &   \vert \mu_1 \vert (B) \\
& > & 1 - \eta \\
& > & 1 - \frac{ \delta \bigl( \frac{\varepsilon}{9}\bigr) }{ 1+ \delta \bigl( \frac{\varepsilon}{9}\bigr)}
\end{eqnarray*}
By applying Lemma \ref{l-UC-P} we deduce
\begin{equation}
\label{T-TPB}
 \bigl \Vert T^{**}\bigl ( I-P_B \bigr ) \bigr \Vert \le \frac{\varepsilon}{9} .
\end{equation}
By Proposition \ref{Ja} $T$ is a weakly compact operator and so  it is satisfied that $T^{**} (C_0(L) ^{**})
\subset Y$. So we can define the operator $\tilde{S} \in  L ( C_0(L),Y)$  by
$$
\tilde{S}(f)= T^{**}(f \chi_B ) + \varepsilon _1  y_{1}^* \bigl(  T^{**} ( f \chi_B )\bigr) T^{**} (
\overline{g_1}  \chi_B) \seg (f \in C_0(L)),
$$
where $\varepsilon_1=\dfrac{1}{6} \dfrac{\delta \bigl(\dfrac{\varepsilon}{9} \bigr)  }{ 1+ \delta
\bigl(\dfrac{\varepsilon}{9} \bigr)}$.

Let us notice that $\tilde{S}^{**}= \tilde{S}^{**} P_B$ and  we have that
\begin{eqnarray*}
\label{norma-tilde-S}
\Vert \tilde{S} \Vert &  \ge & \vert  y_{1}^* \bigl( \tilde{S}^{**} (\overline{g_1} \chi_B \bigr) \vert    \\
& = &  \bigl \vert  y_{1}^* \bigl( T^{**} (\overline{g_1}  \chi_B)  \bigr) +  \varepsilon_1
\bigl( y_{1}^* \bigl( T ^{**} (\overline{g_1}  \chi_B)  \bigr) \bigl( y_{1}^* \bigl( T ^{**}
(\overline{g_1}
\chi_B)  \bigr)   \bigr \vert      \\
&\ge &   \vert   y_{1}^* \bigl( T ^{**} (\overline{g_1}  \chi_B) \vert \; \vert  1 +
\varepsilon_1 y_{1}^*
\bigl( T ^{**} (\overline{g_1}  \chi_B)  \vert    \\
& \ge & \vert  \mu_1 \vert (B)  \bigl( 1 + \varepsilon_1  \vert  \mu_1 \vert (B)
\bigr)     \\
& > & ( 1-  \eta ) \bigl( 1 + \varepsilon_1 ( 1 - \eta )\bigr) \sem \text{(by \eqref{mu1-B})}
\end{eqnarray*}
As a consequence
\begin{equation}
\label{norma-tilde-S-2}
1 \le 1 - \eta +  \varepsilon_1    \bigl( 1 - \eta \bigr) ^2 \le  \Vert \tilde{S} \Vert \le 1 + \varepsilon
_1,
\end{equation}
and so
\begin{equation}
\label{1-norma-tilde-S}
\bigl \vert  1- \Vert \tilde{S} \Vert \bigr \vert  \le   \varepsilon_1.
\end{equation}

For every $h \in C(B)$, we will denote by $h \chi_B$ the natural  extension of $h$ to $L$, which is a Borel
function on $L$. Let be   $S_1 $ the operator given by
$$
S_1 (h)= \tilde{S} ^{**} (h \chi_B) \seg (h \in C(B)),
$$
which is clearly an operator from $C(B)$ into $Y$.
 Since $ \tilde{ S} ^{**}= \tilde{ S} ^{**}P_B$, it is clear that $\Vert S_1 \Vert = \Vert \tilde{S} \Vert$.
We know that  $B$ is a compact set and $\tilde{S}$ is weakly compact, by \cite[Theorem 2]{ACKP} there is an
operator $S_2 \in L ( C(B), Y)$ and $ h_1 \in S_{ C(B)}$ satisfying that
\begin{equation}
\label{S1-h1}
\Vert  \tilde{S} \Vert= \Vert S_2 \Vert =   \Vert S_2  (h_1) \Vert \sem \text{and} \sem \Vert S_2 - S_1 \Vert
<   \frac{\varepsilon \eta}{2}  .
\end{equation}
We can choose $y_{2}^* \in S_{Y^*} $  such that
\begin{equation}
\label{mu-2-h1}
 y_{2}^* \bigl( S_2 (h_1) \bigr) =  \Vert S_2 \Vert.
\end{equation}
By rotating the elements $h_1$ and $y_{2}^*$ if needed we can also assume that $y_{1} ^*
\bigl( T^{**} \bigl( h_1 \chi_B \bigr) \bigr) \in \R^+_{0}$.
In view of \eqref{S1-h1},  the choice of $y_{2}^*$ and by using that $y_{1} ^* \bigl( T^{**}
\bigl( h_1 \chi_B \bigr) \bigr) \in \R^+_{0}$  we have
\begin{eqnarray*}
\Vert \tilde{S} \Vert  -  \frac{\varepsilon \eta}{2}  & \le &  \re y_{2} ^*  \bigl( S_1 \bigl( h_1 \bigr) \bigr) \\
& = & \re y_{2} ^*  \bigl( \tilde{S} ^{**} \bigl(h_1 \chi_B  \bigr) \bigr) \\
 & = &  \re y_{2} ^*  \bigl( T^{**} (h_1\chi_B ) \bigr) + \varepsilon_1  \re y_{1}^* \bigl(  T ^{**}(h_1 \chi_B
 ) \bigr)
y_{2}^* \bigl( T ^{**}  \bigl( \overline{g_1} \chi _B \bigr) \bigr)  \\
& \le & 1+ \varepsilon_1 \re y_{2}^* \bigl( T ^{**} \bigl( \overline{g_1} \chi _B \bigr)
\bigr).
\end{eqnarray*}
Combining this inequality with  the estimate \eqref{norma-tilde-S} we deduce that
\begin{equation}
\label{y2-T-g1}
\rea \bigl( y_{2} ^* (T^{**}  \bigl(  \overline{g_1 } \chi_B \bigr) \bigr) \ge  \bigl( 1-\eta \bigr) ^2 -
\frac{ \eta  (2 + \varepsilon) }{ 2 \varepsilon_1} .
\end{equation}
As a consequence   we obtain that
\begin{eqnarray*}
\rea y_{2}^* \bigl(  \tilde{S} ^{**}  \bigl( \overline {g_1} \chi_B   \bigr) \bigr) & =& \rea
 y_{2}^* \bigl( T^{**} \bigl( \overline {g_1} \chi_B \bigr) \bigr) +  \varepsilon_1  \re  y_{1}^*(T^{**} \bigl(
\overline{g_1} \chi_B  \bigr)  y_{2}^* \bigl(T^{**} \bigl( \overline{g_1} \chi_B  \bigr) \bigr) \\
 & \ge  &   \bigl( 1-\eta \bigr) ^2 - \frac{ \eta  (2 + \varepsilon) }{ 2 \varepsilon_1}  +\varepsilon_1
 \vert \mu _1 \vert (B) \Bigl( \bigl( 1-\eta \bigr) ^2 - \frac{ \eta  (2 + \varepsilon) }{ 2 \varepsilon_1} \Bigr) \\
 & \ge &   \Bigl( (1-\eta )^2 - \frac{ \eta  (2 + \varepsilon) }{ 2 \varepsilon_1} \Bigr) \Bigl( 1  + \varepsilon_1
 \bigl( 1 - \eta \bigr)  \Bigr) \sem \text{(by \eqref{mu1-B})}.
\end{eqnarray*}
So
\begin{eqnarray*}
\rea y_{2}^* \bigl( S_2  \bigl( \overline {g_1}  _{\vert B} \bigr) \bigr) &  \ge&   \rea y_{2}^* \bigl( S_1
\bigl( \overline {g_1}_ {\vert B} \bigr) \bigr)  - \Vert S_2 -  S_1 \Vert \\
&  \ge&   \rea y_{2}^* \bigl( \tilde{S}^{**}
\bigl( \overline {g_1} \chi_B \bigr) \bigr)  - \Vert S_2 -  S_1  \Vert \\
& \ge &  \Bigl( (1-\eta )^2 - \frac{ \eta  (2 + \varepsilon) }{ 2 \varepsilon_1} \Bigr) \Bigl( 1  +
\varepsilon_1    \bigl( 1 - \eta \bigr)\Bigr)                -  \frac{\eta \varepsilon }{2}  \sem \text{(by \eqref{S1-h1})}\\
& \ge &
 \Bigl( (1-\eta )^2 - \frac{ \eta  (2 + \varepsilon) }{ 2 \varepsilon_1} \Bigr) \Bigl( 1  +
\varepsilon_1    \bigl( 1 - \eta \bigr)\Bigr)                -  \frac{\eta \varepsilon \Vert S_2 \Vert}{2}
 \sem \text{(by \eqref{S1-h1} and \eqref{norma-tilde-S-2})}.
\end{eqnarray*}
 Let us write $R_2 =  \dfrac{S_2}{ \Vert S_2 \Vert}$  and  $\mu_2  = R_2 ^* (y_2^*) \in M(B)$. Let $g_2 =
\dfrac{ d \mu_2} { d \vert  \mu_2 \vert} $  and we can assume that $\vert g_2 \vert =1$. From
the previous inequality, in view of \eqref{S1-h1} and \eqref{norma-tilde-S}  we have that
\begin{eqnarray}
\label{mu2-g1}
\rea y_{2}^* \bigl( R_2  \bigl( \overline {g_1}_{ \vert B} \bigr) \bigr) & \ge & \frac{ \Bigl( (1-\eta )^2 -
\frac{\eta  (2 + \varepsilon )  }{2 \varepsilon_1} \Bigr) \Bigl( 1  + \varepsilon_1
\ \bigl( 1 - \eta \bigr)  \Bigr)}{ \Vert S_2 \Vert}  -\frac{ \eta \varepsilon}{2}  \notag \\
& \ge &  \frac{ \Bigl( (1-\eta )^2 - \frac{\eta  (2 + \varepsilon) }{2\varepsilon_1} \Bigr) \Bigl( 1  +
\varepsilon_1  \bigl( 1 - \eta \bigr)
\Bigr)}{ 1 + \varepsilon_1}    -\frac{ \eta \varepsilon}{2}   \\
& = & 1- \frac{  2 \eta -2 \eta ^2  + \varepsilon_1  \bigl( 1 -  (1-\eta )^3\bigr) + \frac{2\eta + \eta
\varepsilon
}{2\varepsilon_1}  + \frac{\eta \varepsilon}{2}  (2 + \varepsilon_1-\eta) }{  1 + \varepsilon_1} \notag \\
%
%& > & 1-  6\eta - 2 \frac{\eta }{ \varepsilon_1} - 2 \alpha \notag \\
%
& > & 1-  6\eta - 2 \frac{\eta }{ \varepsilon_1} - \varepsilon \eta.  \notag
\end{eqnarray}

We consider the measurable set $C$ of $L$ given by
$$
C= \bigl\{ t \in B  : \re \bigl( \overline{g_1} (t)  + h_1 (t) \bigr) g_2 (t) >  2 - \beta \bigr\}.
$$
In view of \eqref{mu-2-h1} and \eqref{mu2-g1} we have that
\begin{eqnarray*}
2  -  6\eta - 2 \frac{\eta }{ \varepsilon_1} - \varepsilon \eta
  &  <  &  \rea  \mu_2 (h_1 + \overline{g_1} _{\vert B} ) \\
& = & \int_{C} \rea (h_1 +\overline{g_1} ) g_2 \; d \vert \mu_2 \vert +   \int_{B\backslash C} \rea (h_1
+\overline{g_1} ) g_2 \; d \vert \mu_2 \vert \\
& \le &    2 \vert \mu_2 \vert (C) +  (2- \beta )  \vert \mu_2 \vert  (B \backslash C)  \\
&  =  &  2 \vert \mu_2 \vert (B)  -  \beta   \vert \mu_2 \vert  (B \backslash C)  \\
&  \le &
  2 -  \beta   \vert \mu_2 \vert  (B \backslash C).
\end{eqnarray*}

Hence
\begin{equation}
\label{mu2-B-C}
\vert \mu_2 \vert  (B \backslash C) \le
\frac{  6 \eta   + 2\frac{\eta}{ \varepsilon_1} + \varepsilon \eta } { \beta }.
\end{equation}

On the other hand, in view of Lemma \ref{basic-l} we have that
\begin{equation}
\label{g1-g2-h1-g2-C}
\Vert \bigl( g_1 - g_2 \bigr) \chi_C \Vert _\infty \le \sqrt{2 \beta}= \frac{\varepsilon}{12}
 \seg \text{and} \seg \Vert \bigl( h_1 - \overline{g_2} \bigr) \chi_C \Vert _\infty \le \sqrt{2 \beta} =
 \frac{\varepsilon}{12}.
\end{equation}
From the previous inequality  and \eqref{f0-g1} it follows that
\begin{equation}
\label{h1-f0-h2-g2-g1-f0}
\Vert \bigl( h_1 - f_0 \bigr) \chi_C \Vert _\infty \le \Vert \bigl( h_1 - \overline{g_2} \bigr) \chi_C \Vert
_\infty  + \Vert \bigl( \overline{g_2}- \overline{g_1} \bigr) \chi_C \Vert _\infty + \Vert \bigl(
\overline{g_1}- f_0 \bigr) \chi_C \Vert _\infty \le \frac{ \varepsilon}{4}.
\end{equation}

By the inner regularity of $\mu_2$ there is a compact set $K_1 \subset C$ such that
\begin{equation}
\label{mu2-C-K1}
\vert \mu_2  \vert (C \backslash K_1 ) < \frac{ \eta \varepsilon}{2}.
\end{equation}

Let us notice that
\begin{eqnarray*}
\Vert R_2^{**} P_ {K_1} \Vert &  \ge &  \Vert y_{2} ^* R_{2}^{**} P_{K_1} \Vert =  \vert \mu_2
\vert (K_1) \\
& = &\vert \mu_2 \vert (B) - \vert \mu_2 \vert (B \backslash C) - \vert \mu_2 \vert(C\backslash K_1)\\
& \ge  &\rea y_{2}^* \bigl( R_{2} \bigl( \overline{g_1} _{\vert B} \bigr) \bigr)  - \vert
\mu_2 \vert (B \backslash C)
 - \vert \mu_2 \vert(C\backslash K_1)\\
& \ge  &\rea y_{2}^*  \bigl( R_{2} \bigl( \overline{g_1} _{\vert B} \bigr)\bigr)   - \vert
\mu_2 \vert (B \backslash C)  - \frac{ \eta \varepsilon}{2} \sem \text{(by \eqref{mu2-C-K1})}\\
& \ge  &  1-  6\eta - 2 \frac{\eta }{ \varepsilon_1} - \varepsilon \eta - \frac{  6 \eta   + 2\frac{\eta}{
\varepsilon_1} + \varepsilon \eta } { \beta }-
 \frac{ \eta \varepsilon}{2}  \sem \text{(by  \eqref{mu2-g1} and \eqref{mu2-B-C})}\\
& >  &  1- 2  \frac{  6 \eta   + 2\frac{\eta}{ \varepsilon_1} + \varepsilon \eta } { \beta }-
 \frac{ \eta \varepsilon}{2}  \\
& > & 1 - \frac{\delta \bigl( \frac{\varepsilon}{9}\bigr)}{1+\delta \bigl( \frac{\varepsilon}{9}\bigr)} > 0.
\end{eqnarray*}
Hence $K_1 \ne \varnothing$.

In view of Lemma \ref{CK-comlex-UC} we obtain
\begin{equation}
\label{R2-K1}
\Vert R_{2} ^{**} (P_B-P_{K_1} ) \Vert \le \frac{\varepsilon}{9}.
\end{equation}

We denote by $T_2$ the element in $L(C_0(L),Y)$ defined  by
$$
T_2 (f) = R_2 (f_{ \vert B}) \seg (f \in C_0(L)).
$$
Clearly it is satisfied that $\Vert T_{2}^{**} (I-P_{K_1} ) \Vert = \Vert R_{2} ^{**}
(P_B-P_{K_1} ) \Vert$ and since $ T_{2} ^{**} (P_B-P_{K_1} ) = T_{2} ^{**} (I-P_{K_1} )P_B$ in
view of \eqref{R2-K1} we obtain
\begin{equation}
\label{T2-B-K1}
\Vert T_{2} ^{**} (P_B-P_{K_1} ) \Vert \le \frac{\varepsilon}{9}.
\end{equation}
We also write $R (f)= T^{**} (f \chi_B) $ for every  $f \in C(B)$ and so we have
\begin{equation}
\label{T2-T-B}
\Vert  \bigl( T_{2} ^{**} - T^{**} \bigr)  P_B  \Vert = \Vert  R_{2} -R \Vert.
\end{equation}
By the definition of $S_1$ we  know that
\begin{equation}
\label{S1-R}
\Vert S_1 -R \Vert \le \varepsilon_1 .
\end{equation}

Since  $K_1 \ne \varnothing$, let us fix $t_0 \in K_1$. Since $K_1 \subset C$,  we have that
$\vert h_1 (t_0) \vert > 1 - \beta > 1 - \frac{\varepsilon}{2}$. So we can choose and open set
$V$ in $B$ such that $t_0 \in V \subset  \bigl\{ t \in B : \vert h_1 (t) \vert > 1 -
\frac{\varepsilon}{2} \bigr\}$ and a function $v  \in C(B)$ satisfying $v(B) \subset [0,1],
v(t_0)=1 $ and $\supp v \subset V$. So there are  functions $h_i \in  C(B)$ ($i=2,3$) such
that
\begin{equation}
\label{h2}
h_2 (t)= h_1 (t) + v(t) \bigl( 1 - \vert h_1 (t) \vert \bigr) \frac{h_1 (t)}{\vert h_1 (t)
\vert} \seg (t \in B).
\end{equation}
and
\begin{equation}
\label{h3}
h_3 (t)= h_1 (t) - v(t) \bigl( 1 - \vert h_1 (t) \vert \bigr)\frac{h_1 (t)}{\vert h_1 (t)
\vert}   \seg (t \in B).
\end{equation}

It is clear that $h_i \in B_{ C(B)}$ for $i=2,3$ and  $h_1 = \dfrac{1}{2} \bigl( h_2 + h_3
\bigr)$.  By using  that the operator $R_2$ attains its norm at $h_1$ we clearly have that
\begin{equation}
\label{R2-h2}
\Vert R_2 (h_2 ) \Vert = 1 \sem \text{and} \sem \vert h_2(t_0 )  \vert =1.
\end{equation}
Since $\supp v \subset  V \subset  \bigl\{ t \in B : \vert h_1 (t) \vert > 1 -
\frac{\varepsilon}{2} \bigr\}$ we obtain for $t \in V$ that
\begin{equation}
\label{h2-h1-V}
\bigl\vert h_2 (t) - h_1 (t) \bigr\vert  \le 1 - \vert h_1 (t) \vert < \frac{\varepsilon}{2}.
\end{equation}
For $t \in B \backslash  V$, $h_2(t) = h_1 (t)$ so $ \Vert h_2 - h_1 \Vert <
\frac{\varepsilon}{2}$. In view of \eqref{h1-f0-h2-g2-g1-f0} we obtain that
\begin{eqnarray}
\label{h2-f0}
\bigl\Vert h_2 - {f_0}_{\vert C} \bigr\Vert  &   \le  & \bigl\Vert h_2  - h_1 \bigr\Vert + \bigl\Vert h_1 -
{f_0}_{\vert C} \bigr\Vert  \notag \\
&  \le &  \frac{\varepsilon}{2}+  \frac{\varepsilon}{4} \\
&  = &   \frac{3\varepsilon}{4}.  \notag
\end{eqnarray}

Since $B \subset L$ is  a compact subset, there is a function $ f_2 \in C_0(L)$ such that it extends the
function $h_2$ to $L$ (see for instance \cite[Corollary 9.15 and Theorem 12.4]{Jam} and \cite[Theorems 17 and
18]{Kel}).
%p. 146 Kelley and p. 87 y 113
 Since the function $\Phi: \C \llll \C$ given
by $\Phi(z)=z $ if $\vert z \vert \le 1$ and  $\Phi(z)=\frac{z }{\vert z \vert }$ if $\vert z \vert > 1$ is
continuous, by using $\Phi \circ  f_2$ instead of $f_2$ if needed, and the fact that $ h_2 \in S_{ C(B)}$ we
can also assume that  $ f_2 \in S_{ C_0(L)}$. Since $f_2$ is an extension of $h_2$, by using \eqref{h2-f0}
there is an open set $G\subset L$ such that $K_1 \subset G$ and satisfying also that
\begin{equation}
\label{f2-f0}
 \Vert \bigl(  f_2 - f_0 \bigr) \chi _G  \Vert_\infty <   \frac{7\varepsilon}{8}.
\end{equation}

By  Urysohn's Lemma there is a function $u\in C_0(L)$ such that $u(L) \subset [0,1]$, $u_{\vert K_1} =1$ and
$\supp u \subset G$. We define the function $f_3$ by
$$
f_3 = u f_2 + (1-u) f_0,
$$
that clearly belongs to $B_{C_0(L)}$.

Notice also that
\begin{equation}
\label{f3-f2-f0}
f_3(t)= f_2 (t)= h_2(t) \sep \forall t \in K_1, \sem f_3(t)=  f_0(t) \sep \forall t \in L\backslash G
\end{equation}
and
\begin{equation}
\label{f3-G-C}
 \vert f_3 (t) - f_0 (t) \vert = u(t)  \vert f_2 (t) - f_0(t) \vert, \seg \forall t \in G \backslash K_1.
\end{equation}
In view of \eqref{f2-f0} we obtain  that
\begin{equation}
\label{f3-f0}
\Vert f_3 - f_0 \Vert  < \varepsilon.
\end{equation}

We write $\lambda _0 = \overline{h_2 (t_0)}$  and we know that  $\vert \lambda _0 \vert =1$. Define the
operator $S \in L(C_0(L),Y) $ given by
$$
S(f) = R_2 ^{**} \bigl( ( f \chi _{K_1} )_{ \vert B} \bigr)  + \lambda _0  f (t_0)  R_2 ^{**} \bigl( h_2 \chi
_{B \backslash K_1} \bigr)   \seg (f \in C_0(L)).
$$
Since $R_2$ is weakly compact,  $S$  is well-defined.  For every $f \in B_{ C_0(L)}$ we have that $\vert
\lambda _0  f (t_0) \vert  \le  1$ and so
$$
\Vert ( f \chi _{K_1} )_{ \vert B} + \lambda _0  f (t_0)  h_2  \chi _{ B \backslash K_1} \Vert
_\infty  \le 1.
$$
Since $\Vert R_2 \Vert \le 1$, then
$$
\Vert S (f) \Vert =  \Vert  R_2 ^{**} \bigl(  ( f \chi _{K_1} )_{ \vert B}  + \lambda _0 f
(t_0)  h_2 \chi _{ B \backslash K_1} \bigr)  \Vert \le 1.
$$
 It is also satisfied that
\begin{eqnarray*}
 S (f_3)  &= & R_2 ^{**} \bigl(  (f_3 \chi _{K_1})_{\vert B} \bigr)  +
  \lambda _0  f_3 (t_0)  R_2 ^{**} (h_2  \chi _{ B
\backslash K_1})\\
%
%& = & R_2 ^{**} \bigl(  (f_3 \chi _{K_1})_{\vert B} \bigr)  +   R_2 ^{**} (h_2  \chi _{ B
%\backslash K_1})   \sem \text{(by \eqref{f3-f2-f0})}\\
%
& = & R_2 ^{**} (h_2) \sem \text{(by \eqref{f3-f2-f0})}\\
& = & R_2 (h_2 )
\end{eqnarray*}
and in view of \eqref{R2-h2}  we obtain  $\Vert  S (f_3)  \Vert = \Vert  R_2 (h_2) \Vert =1$. Hence $S \in
S_{ L (C_0(L),Y)}$ and it attains its norm at $f_3$.  We also know that $\Vert f_3 - f_0 \Vert < \varepsilon
$ by inequality \eqref{f3-f0}. It suffices to check that $S$ is close to $T$.  Indeed we obtain the following
estimate
\begin{eqnarray*}
\Vert S - T \Vert & \le &  \Vert S ^{**} - T^{**} P_B \Vert  + \Vert T^{**} (I- P_B)  \Vert \\
& \le & \Vert  T_2 ^{**} P_{K_1}   - T^{**}  P_B    \Vert +  \Vert R_2^{ **} (P_B-P_{K_1})
\Vert  + \frac{\varepsilon}{9} \sem \text{(by \eqref{T-TPB})}\\
& =& \Vert  \bigl( T_{2}  ^{**} - T^{**} \bigr) P_B    \Vert +   \Vert T_2 ^{ **}
(P_B-P_{K_1}) \Vert  +     \frac{2\varepsilon}{9}    \sem \text{(by \eqref{R2-K1})}    \\
&\le& \Vert  \bigl( T_{2}  ^{**} - T^{**} \bigr) P_B    \Vert +   \frac{\varepsilon}{3} \seg
\text{(by \eqref{T2-B-K1})}\\
&=& \Vert   R_{2}  - R    \Vert +   \frac{\varepsilon}{3} \seg    \text{(by \eqref{T2-T-B})}   \\
&\le& \Vert   R_2 - S_2   \Vert + \Vert   S_2 - S_1   \Vert + \Vert   S_1 - R \Vert
+   \frac{\varepsilon}{3} \\
&\le& \bigl\vert   1- \Vert S_2 \Vert \bigr\vert + \frac{\eta \varepsilon}{2}  + \varepsilon_1+
      \frac{\varepsilon}{3} \seg   \text{(by \eqref{S1-h1} and \eqref{S1-R})}\\
&\le & 2 \varepsilon_1 +  \frac{\eta \varepsilon}{2} +  \frac{ \varepsilon}{3} < \varepsilon \seg \text{(by
\eqref{1-norma-tilde-S} and \eqref{S1-h1})}.
\end{eqnarray*}

\end{proof}

Since  the complex spaces $L_p (\mu)$ ($1 \le p < \infty$) are $\C$-uniformly convex we obtain the following
result:

\begin{corollary}
In the complex case the pair $(C_0(L), L_p (\mu))$  does  have the Bishop-Phelps-Bollob\'{a}s property for
operators for every Hausdorff locally compact space $L$, every positive measure $\mu$ and $1\le p < \infty$.
\end{corollary}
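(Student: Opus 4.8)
The plan is to deduce the statement directly from Theorem \ref{CK-comlex-UC}, whose hypotheses require only that the range space be $\C$-uniformly convex and place no restriction whatsoever on the locally compact Hausdorff space $L$. Thus the sole task is to certify that every $L_p(\mu)$ with $1 \le p < \infty$ is $\C$-uniformly convex; once this is in hand, applying the theorem with $Y = L_p(\mu)$ yields the desired BPBp, with the modulus $\eta$ controlled by the modulus of $\C$-convexity of $L_p(\mu)$ (as asserted in the last sentence of Theorem \ref{CK-comlex-UC}).

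First I would dispose of the range $1 < p < \infty$. Here $L_p(\mu)$ is uniformly convex by Clarkson's inequalities, and, as recorded at the start of Section 2, every uniformly convex complex space is $\C$-uniformly convex. Hence the modulus satisfies $\delta(\varepsilon) > 0$ for all $\varepsilon > 0$, and Theorem \ref{CK-comlex-UC} applies verbatim to $Y = L_p(\mu)$.

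The endpoint $p = 1$ is the only case needing a separate argument, since $L_1(\mu)$ fails to be uniformly convex. Here I would invoke Globevnik's theorem \cite[Theorem 1]{Glo}, already cited in the paper, which asserts precisely that the complex space $L_1(\mu)$ is $\C$-uniformly convex. With this, $L_1(\mu)$ also satisfies the hypothesis of Theorem \ref{CK-comlex-UC}, and the conclusion follows for $p = 1$ as well.

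There is no genuine obstacle: all the analytic work is concentrated in Theorem \ref{CK-comlex-UC}, and the corollary reduces to recognizing the classical $L_p$ spaces as admissible ranges. The only subtlety worth flagging is that the case $p = 1$ cannot be reached through ordinary uniform convexity and genuinely requires the complex notion supplied by Globevnik's result.
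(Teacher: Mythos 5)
Your proposal is correct and matches the paper's own (one-line) deduction: the corollary follows from Theorem \ref{CK-comlex-UC} once one knows that $L_p(\mu)$ is $\C$-uniformly convex, via uniform convexity for $1<p<\infty$ and Globevnik's theorem for $p=1$. Your write-up simply makes explicit the verification the paper leaves implicit.
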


As we already mentioned we extended in a non trivial way a result by S.K. Kim, H.J. Lee and P.K. Lin where
they consider  any (complex) space $L_\infty (\nu)$  as the domain space \cite{KLL}.

\bigskip

{\bf Open problem:} In the real case it is not known  whether or not the pair $(c_0,\ell_1)$ has the BPBp for
operators.

 \vskip20mm

{\bf Acknowledgement:} It is my  pleasure  to thank Miguel Mart\'{\i}n for some comment and to Han Ju Lee for
pointing out some references that simplified the assertion in the main result.

\bigskip

\bibliographystyle{amsalpha}

\end{document}